\documentclass[oneside,english]{article}
\usepackage[T1]{fontenc}
\usepackage{amsthm}
\usepackage{amsmath}
\usepackage{amssymb}
\usepackage[all]{xy}
\usepackage{amsmath}
\usepackage{comment}
\usepackage{color}
\usepackage[nocompress]{cite}
\usepackage{tikz-cd}
\usetikzlibrary{decorations.markings}
\tikzset{double line with arrow/.style args={#1,#2}{decorate,decoration={markings,%
			mark=at position 0 with {\coordinate (ta-base-1) at (0,1pt);
				\coordinate (ta-base-2) at (0,-1pt);},
			mark=at position 1 with {\draw[#1] (ta-base-1) -- (0,1pt);
				\draw[#2] (ta-base-2) -- (0,-1pt);
}}}}

\makeatletter
\newcommand{\nc}{\newcommand}
\newtheorem{thm}{Theorem}
\theoremstyle{plain}
\nc{\bthm}{\begin{thm}} \nc{\ethm}{\end{thm}}
\newtheorem{prop}[thm]{Proposition}
\nc{\bprp}{\begin{prop}} \nc{\eprp}{\end{prop}}
\newtheorem{prob}[thm]{Problem}
\nc{\bprb}{\begin{prob}} \nc{\eprb}{\end{prob}}
\newtheorem{fact}[thm]{Fact}
\nc{\bfct}{\begin{fact}} \nc{\efct}{\end{fact}}
\newtheorem{lem}[thm]{Lemma}
\nc{\blem}{\begin{lem}} \nc{\elem}{\end{lem}}
\newtheorem{claim}[thm]{Claim}
\nc{\bclm}{\begin{claim}} \nc{\eclm}{\end{claim}}
\newtheorem{cor}[thm]{Corollary}
\nc{\bcor}{\begin{cor}} \nc{\ecor}{\end{cor}}
\newtheorem{conj}[thm]{Conjecture}
\nc{\bcnj}{\begin{conj}} \nc{\ecnj}{\end{conj}}
\theoremstyle{definition}
\newtheorem{defn}[thm]{Definition}
\nc{\bdfn}{\begin{defn}} \nc{\edfn}{\end{defn}}
\newtheorem{observation}[thm]{Observation}
\nc{\bobs}{\begin{observation}} \nc{\eobs}{\end{observation}}
\theoremstyle{remark}
\newtheorem{rem}[thm]{Remark}
\nc{\brem}{\begin{rem}} \nc{\erem}{\end{rem}}
\newtheorem{cnv}[thm]{Convention}
\nc{\bcnv}{\begin{cnv}} \nc{\ecnv}{\end{cnv}}
\newtheorem{exam}[thm]{Example}
\nc{\bexm}{\begin{exam}} \nc{\eexm}{\end{exam}}

\newtheorem*{answer*}{Answer}

\nc{\bpf}{\begin{proof}} \nc{\epf}{\end{proof}}
\nc{\be}{\begin{enumerate}}
	\nc{\ee}{\end{enumerate}}
\nc{\bi}{\begin{itemize}}
	\nc{\itm}{\item}
	\nc{\ei}{\end{itemize}}
\nc{\invlim}{\lim_{\leftarrow}}
\nc{\dirlim}{\lim_{\rightarrow}}
\nc{\mm}{\mathbf{m}}
\nc{\nn}{\mathbf{n}}
\nc{\FF}{\mathcal{F}}
\nc{\CC}{\mathcal{C}}
\nc{\Span}{\operatorname{span}}
\nc{\Img}{\operatorname{Im}}
\nc{\rank}{\operatorname{rank}}
\nc{\proj}{\operatorname{proj}}
\nc{\F}{\mathbb{F}}
\nc{\Z}{\mathbb{Z}}
\nc{\Q}{\mathbb{Q}}
\nc{\cd}{\operatorname{cd}}
\nc{\Ind}{\operatorname{Ind}}

\title{Pro-$p$ Poincar\'{e}-Duality groups of infinite rank}
\author{Tamar Bar-On}
\date{}

\begin{document}
	
	\maketitle
	\begin{abstract}
		We introduce the class of \textit{Generalized Poincar\'{e}-Duality groups}: i.e, pro-$p$ groups of infinite rank which satisfy a Poincar\'{e}-duality. We prove some basic properties of Generalized Poincar\'{e}-Duality groups, and show that under some conditions, a group that fits into an exact sequence of GPD groups must be a Demushkin group.
	\end{abstract}
	\section*{Introduction}
	Fix a prime $p$. Pro-$p$ Poincar\'{e} groups of dimension $n$, also known as pro-$p$ Poincar\'{e} duality groups of dimension $n$, are the infinite pro-$p$ groups $G$ satisfying the following conditions:\begin{enumerate}

		\item $\dim H^n(G)=1$.
		\item $\dim H^i(G)<\infty$ for all $0\leq i\leq n$.
		\item The cup product induces a nondegenerate pairing $H^i(G)\times H^{n-i}(G)\to H^n(G)$ for all $0\leq i\leq n$.
		
	\end{enumerate}
	We will refer to the third condition as a "Poincar\'{e} Duality" .
	
	Here and below $H^i(G)$ always denotes the $i$'th cohomology group of $G$ with coefficients in $\F_p$, considered as a module with the trivial action. 
	Notice that since for every pro-$p$ group $G$, $\dim H^1(G)$ equals the rank of $G$, i.e, the minimal cardinality of a set of generators converging to 1, then  pro-$p$ Poincar\'{e} duality groups (which we refer to as pro-$p$ PD groups) must be finitely generated. Pro-$p$ PD groups were studied in detail by Serre (\cite{serre1979galois}) and are in fact a special case of the more general definition of  \textit{profinite} Poincar\'{e} duality groups, which are in turn a generalization of the abstract Poincar\'{e} duality groups. For more information on profinite PD groups one shall look at \cite[Chapter 3]{neukirch2013cohomology}. 
	
	Pro-$p$ PD groups play a crucial role in several aspects of pro-$p$ group theory. One of the most famous and important examples of pro-$p$ PD groups are the uniform analytic pro-$p$ groups (See a theorem of Lazard, also can be found in \cite[Chapter 11]{wilson1998profinite}), which implies that every $p$-adic analytic group is virtually a pro-$p$ PD group. In addition, the pro-$p$ PD groups of dimension 2 are precisely the Demushkin groups, which cover all maximal pro-$p$ Galois groups of local fields that contain a primitive root of unity of order $p$ (see results of Serre and Demushkin in \cite{serre1962structure, demushkin1961group, demushkin19632}), and by the Elementary type conjecture by Ido Efrat, serve as part of the building blocks of all finitely generated maximal pro-$p$ Galois groups of fields which contain a primitive root of unity of order $p$. (\cite{efrat1995orderings}). 
	
	In 1986, Labute extended the theory of Demushkin groups to that of "Demushkin groups of infinite countable rank" which were defined as the pro-$p$ groups of infinite countable rank, i.e, $\dim(H^1(G))=\aleph_0$, which satisfy $\dim H^2(G)=1$ and for which the cup product bilinear form $H^1(G)\times H^1(G)\to H^2(G)\cong \F_p$ is nondegenerate. In \cite{bar2024demushkin} the theory was extended to Demushkin groups of arbitrary rank, which are the pro-$p$ groups of arbitrary rank which satisfy that $H^2(G)\cong \F_p$ and that the cup product bilinear form $H^1(G)\times H^1(G)\to H^2(G)\cong \F_p$ is nondegenerate.. In particular, it has been proven that for every uncountable cardinal $\mu$, there exit $2^{\mu}$ pairwise nonisomorphic Demushkin groups of rank $\mu$. 
	
	The object of this paper, that will hopefully lead to further research, is to extend the theory of Demushkin groups of arbitrary rank, and start to develop a theory for pro-$p$ Poincar\'{e}-Duality groups of arbitrary rank, which we refer to as \textit{Generalized Poincar\'{e} Duality} groups (GPD groups).
	\begin{defn}
		Let $G$ be an infinite pro-$p$ group. We say that $G$ is a generalized Poincar\'{e} Duality group of dimension $n$ if $H^n(G,\mathbb{F}_p)\cong \mathbb{F}_p$ and for every $0\leq i\leq n$, the cup product yields a nondegenerate pairing: $H^i(G,\mathbb{F}_p)\times H^{n-i}(G,\F_p)\to H^n(G,\mathbb{F}_p)\cong \mathbb{F}_p$. 
	\end{defn}
	Notice that although for pro-$p$ PD groups, the nondegeneracy implies that the natural maps $H^i(G)\to H^{n-i}(G)^*$ induced by the cup product are isomorphisms, for GPD groups we can only get injections, which makes the theory more complicated. In fact, requiring the maps $H^i(G)\to H^{n-i}(G)^*$ to be isomorphisms will imply that $G$ is finitely generated (see, for example, the proof of \cite[Proposition 3.7.6]{neukirch2013cohomology}), hence injectivity is the strongest property we can require.  
	
	The paper is organized as follows: in the first section we give some general results, and in particular prove that the cohomological dimension of a GPD group of dimension $n$ is $n$, and compute its dualizing module at $p$. In Section 2 we prove that under some conditions, a group that fits into an exact sequence of GPD groups must be a Demushkin group.

	\section{Basic properties}
	In this section we compute the cohomological dimension of GPD groups of dimension $n$, and the possible options for the dualizing module at $p$- whose existence is guaranteed by the finiteness of the cohomological dimension. For the rest of this section, $G$ denotes a generalized Poincar\'{e}-Duality group of dimension $n$.
	
	We first need a few lemmas. Let ${_p\operatorname{Mod}(G)}$ denotes the class of $G$- modules $A$ which are annihilated by $p$, i.e, for which $pA=0$.
	\begin{lem}\label{from F_p to every module}
		For every finite $G$- module $A\in {_p\operatorname{Mod}(G)}$, the natural maps $$H^i(G,A)\to H^{n-i}(G,A^*)^*$$ induced by the cup product $$H^i(G,A)\times H^{n-i}(G,A^*)\to H^n(G,A\otimes A^*)\to H^n(G,\F_p)\cong \F_p$$ are injective, for $i=0,1$.
	\end{lem}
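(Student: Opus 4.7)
The plan is to prove the lemma by devissage on the length of a composition series of $A$ as a finite $\F_p[G]$-module. Since $G$ is pro-$p$ and $pA = 0$, every nonzero finite $\F_p[G]$-module has a nonzero $G$-fixed vector, so $A$ admits a filtration $0 = A_0 \subset A_1 \subset \cdots \subset A_m = A$ with $A_k/A_{k-1} \cong \F_p$ (trivial action) for every $k$. I will induct on $m$. The base case $m = 1$, where $A = \F_p$, is exactly the nondegeneracy hypothesis built into the definition of a GPD group.

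For the inductive step, set $A' = A_{m-1}$ and consider the short exact sequence $0 \to A' \to A \to \F_p \to 0$, which dualizes to $0 \to \F_p \to A^* \to (A')^* \to 0$. The two cohomology long exact sequences, with the second one dualized over $\F_p$ so that its arrows are reversed, fit into a commutative ladder via the cup-product maps $\alpha^k_M : H^k(G,M) \to H^{n-k}(G,M^*)^*$, by naturality of cup products with respect to short exact sequences.

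For $i = 1$, the four consecutive terms
\[
H^0(\F_p) \to H^1(A') \to H^1(A) \to H^1(\F_p)
\]
map via $\alpha^0_{\F_p}, \alpha^1_{A'}, \alpha^1_A, \alpha^1_{\F_p}$ to
\[
H^n(\F_p)^* \to H^{n-1}((A')^*)^* \to H^{n-1}(A^*)^* \to H^{n-1}(\F_p)^*.
\]
The first is an isomorphism (both sides are $\F_p$), the second is injective by the inductive hypothesis, and the fourth is injective by the GPD hypothesis; the injective four-lemma then forces $\alpha^1_A$ to be injective.

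The case $i = 0$ is the main obstacle. The analogous four-term window places $\alpha^0_A$ in the second column of the ladder, where a direct application of the injective four-lemma would require the vanishing of $H^{n+1}(G, \F_p)$, i.e.\ the bound $\cd_p(G) \le n$. I expect this is handled either by arranging for that dimension bound to be available by the time the lemma is invoked (the bound is itself one of the main targets of this section), or by a careful diagram chase: given $x \in A^G$ with $\alpha^0_A(x) = 0$, naturality through the isomorphism $\alpha^0_{\F_p}$ forces the image of $x$ in $(\F_p)^G$ to vanish, so $x = \iota(x')$ for some $x' \in (A')^G$; the inductive hypothesis then shows $\alpha^0_{A'}(x')$ must lie in the image of the dualized connecting map $H^{n+1}(\F_p)^* \to H^n((A')^*)^*$, and the proof is completed by a further vanishing argument for the connecting map $\delta : H^n((A')^*) \to H^{n+1}(\F_p)$ that forces $x' = 0$.
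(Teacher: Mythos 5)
Your treatment of $i=1$ is exactly the paper's: induction on the length of $A$ via $0\to A_0\to A\to\F_p\to0$ and the injective four lemma, anchored by the fact that $H^0(G,\F_p)\to H^n(G,\F_p^*)^*$ is an isomorphism of one-dimensional spaces. The case $i=0$, however, is left with a genuine gap, and neither of the escape routes you sketch can be made to work as stated. The bound $\cd(G)\le n$ is not available here: in the paper it is Corollary \ref{cohomological dimension}, whose proof depends on Lemma \ref{Isomorphism for 0}, which in turn depends on the present lemma, so invoking it would be circular. Your second route runs into the same circularity. After reducing to $x'\in H^0(G,A')$ with $\alpha^0_{A'}(x')$ lying (by exactness of the dualized bottom row) in the image of $H^{n+1}(G,\F_p^*)^*\to H^n(G,(A')^*)^*$, you need that image to vanish; this amounts to the vanishing of the connecting map $H^n(G,(A')^*)\to H^{n+1}(G,\F_p^*)$, i.e.\ to surjectivity of $H^n(G,A^*)\to H^n(G,(A')^*)$. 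But right-exactness of $H^n(G,-)$ on ${_p\operatorname{Mod}(G)}$ is precisely what the paper later deduces \emph{from} Lemma \ref{Isomorphism for 0} in proving Corollary \ref{cohomological dimension}. The ``further vanishing argument'' you defer to is therefore not a routine check; it is the heart of the matter, and nothing you have established so far supplies it.

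The missing idea is to run the reduction in the opposite direction: instead of filtering $A$ from below, coefface $H^0$. For every finite $A\in{_p\operatorname{Mod}(G)}$ there is a finite $B\in{_p\operatorname{Mod}(G)}$ and a surjection $B\to A$ such that $H^0(G,B)\to H^0(G,A)$ is the zero map; this is the construction in the proof of \cite[Proposition 3.7.6]{neukirch2013cohomology}, which works for any infinite pro-$p$ group. Setting $A^0=\ker(B\to A)$, exactness of $H^0(G,B)\to H^0(G,A)\to H^1(G,A^0)$ makes the connecting map $H^0(G,A)\to H^1(G,A^0)$ injective, and the commutative square comparing it with $H^n(G,A^*)^*\to H^{n-1}(G,(A^0)^*)^*$ transports the already established injectivity of $\alpha^1_{A^0}$ (valid for \emph{every} finite module in ${_p\operatorname{Mod}(G)}$, which is why the $i=1$ case must be completed first) to the injectivity of $\alpha^0_A$. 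No statement about $H^{n+1}$, and no surjectivity in degree $n$, is needed. Replace your $i=0$ paragraph with this argument.
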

	\begin{rem}
		Observe that for a $G$-module $A\in {_p\operatorname{Mod}(G)}$, $A^*=\operatorname{Hom}(A,\F_p)$, and $H^i(G,A)\in {_p\operatorname{Mod}(G)}$ as well. Hence the above maps are defined.
		For the rest of the paper the maps $$H^i(G,A)\to H^{n-i}(G,A^*)^*$$ will always refer to the maps induced by the cup product.
	\end{rem}
	\begin{proof}[Proof of Lemma \ref{from F_p to every module}]
		First observe that every $A\in {_p\operatorname{Mod}(G)}$ is a vector space over $\F_p$ and hence of $p$-power cardinality.
		
		Now let $A\in {_p\operatorname{Mod}(G)}$ be finite.	We need to show that for every $0\leq i\leq 1$, the map $H^i(G,A)\to (H^{n-i}(G,A^*))^*$ induced by the cup product, is injective. First we prove the claim for $i=1$. We prove it by induction on the size of $A$. For $|A|=p^1$, this is the assumption. Assume that $|A|>p$.  Since the only simple module over a pro-$p$ group is $\F_p$, $A$ admits an exact sequence $0\to A_0\to A\to \F_p\to 0$. Hence we get a commutative diagram 
		$$
		\xymatrix@R=14pt{ H^0(G,\F_p) \ar[d] \ar[r] & H^1(G,A_0) \ar[d] \ar[r] &  H^1(G,A) \ar[d] \ar[r] & H^1(G,\F_p) \ar[d]  \\
			H^n(G,\F_p)^* \ar[r] & H^{n-1}(G,(A_0)^*)^*  \ar[r] &  H^{n-1}(G,A^*)^* \ar[r] & H^{n-1}(G,\F_p^*)^*  }
		$$
		
		By induction assumption, and by definition, the maps $$H^1(G,A_0)\to (H^{n-1}(G,A_0^*))^*$$ and $$H^1(G,\F_p)\to (H^{n-1}(G,\F_p^*))^*$$ are injective. Since the map $H^0(G,\F_p)\to (H^{n}(G,\F_p^*))^*$ is in fact isomorphism, by the cardinalities of the groups, then diagram chasing implies  that the map $H^1(G,A)\to (H^{n-1}(G,A^*))^*$ is injective. Now that we have the injectivity in $H^1$ for every module $A\in {_p\operatorname{Mod}(G)}$, we will prove the injectivity for every module $A\in {_p\operatorname{Mod}(G)}$ in $H^0$. First we want to show that the functor $H^0(G,)$ in the category of finite $G$- modules in ${_p\operatorname{Mod}(G)}$ is coeffacable. That has been done in \cite[Proof of Proposition 3.7.6, Page 218]{neukirch2013cohomology} and the proof holds for every infinite pro-$p$ group.  Hence for every $A$ we can choose some finite $G$-module $B\in {_p\operatorname{Mod}(G)}$ with a projection $B\to A$ such that the induced map $H^0(B)\to H^0(A)$ is zero. Hence looking at the commutative diagram   
		$$
		\xymatrix@R=14pt{ H^0(G,B) \ar[d] \ar[r] & H^0(G,A) \ar[d] \ar[r] &  H^1(G,A^0)\ar[d]  \\
			H^n(G,B^*)^* \ar[r] & H^{n}(G,A^*)^*  \ar[r] &  H^{n-1}(G,{A^0}^*)^*}
		$$
		when $A^0=\ker(B\to A)$. By the injectivity of $H^1(G,A^0)\to  H^{n-1}(G,{A^0}^*)^*$, which holds for every finite $A^0\in {_p\operatorname{Mod}(G)}$, we get the injectivity of $H^0(G,A)\to  H^{n}(G,{A}^*)^*$. 
	\end{proof}
	\begin{lem}\label{Isomorphism for 0}
		For every finite $G$- module $A\in {_p\operatorname{Mod}(G)}$, the natural map $H^0(G,A)\to H^n(G,A^*)^*$ induced by the cup product is in fact an isomorphism.
	\end{lem}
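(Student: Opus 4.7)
The strategy is induction on $|A|$, using the injectivity already supplied by Lemma \ref{from F_p to every module} together with a five-lemma chase to upgrade it to an isomorphism. For the base case $A = \F_p$, both source and target are one-dimensional over $\F_p$, so the known injection is automatically an isomorphism.

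For the inductive step I mimic the construction used in the proof of Lemma \ref{from F_p to every module}: since $\F_p$ is the only simple $G$-module in ${_p\operatorname{Mod}(G)}$, there is a short exact sequence $0 \to A_0 \to A \to \F_p \to 0$ with $|A_0|<|A|$, whose dual reads $0 \to \F_p \to A^* \to A_0^* \to 0$. Taking the long exact sequences in cohomology of both and dualizing the second via $\operatorname{Hom}(-,\F_p)$, which is exact on $\F_p$-vector spaces, produces a commutative ladder with exact rows whose four relevant columns are
$$ H^0(G,A_0) \to H^0(G,A) \to H^0(G,\F_p) \to H^1(G,A_0) $$
on the top and
$$ H^n(G,A_0^*)^* \to H^n(G,A^*)^* \to H^n(G,\F_p)^* \to H^{n-1}(G,A_0^*)^* $$
on the bottom, with vertical maps induced by the cup product.

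By the inductive hypothesis the leftmost vertical is an isomorphism, by the base case the third vertical is an isomorphism, and by the $i=1$ case of Lemma \ref{from F_p to every module} the rightmost vertical is injective. A standard element chase then yields surjectivity of the middle vertical $H^0(G,A) \to H^n(G,A^*)^*$; combined with its injectivity (again from Lemma \ref{from F_p to every module}) this gives the desired isomorphism. The one subtle point is that we have not yet proved $\cd(G) \leq n$, so $H^{n+1}(G,\F_p)$ might be nonzero and the bottom row could in principle continue further to the left; however, the chase invokes only exactness at the four displayed positions, so this causes no obstacle.
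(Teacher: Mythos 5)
Your proposal is correct and follows essentially the same route as the paper: the same short exact sequence $0\to A_0\to A\to \F_p\to 0$, the same dualized ladder ending in $H^{n-1}(G,A_0^*)^*$, and the same four-lemma chase using surjectivity of the outer verticals and injectivity of the last one. Your closing remark that the chase is unaffected by not yet knowing $\cd(G)\leq n$ is a sound (and welcome) extra precaution that the paper leaves implicit.
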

	\begin{proof}
		We already know that the map $H^0(A)\to H^n(A^*)^*$ is injective. We are left to show that it is surjective. 
		
		We prove it by induction on the size of $A$. If $|A|=p^1$ it follows from equality of finite cardinalities. Now assume that $|A|>p$. 
		
		Look at the exact sequence $0\to A_0\to A\to \F_p$. It yields the following diagram:
		$$
		\xymatrix@R=14pt{ H^0(G,A_0) \ar[d] \ar[r] & H^0(G,A) \ar[d] \ar[r] &  H^0(G,\F_p)\ar[d] \ar[r] & H^1(G,A_0) \ar[d]\\
			H^n(G,{A_0}^*)^* \ar[r] & H^{n}(G,A^*)^*  \ar[r] &  H^{n}(G,{\F_p}^*)^*\ar[r] & H^{n-1}(G,{A_0}^*)^* }
		$$
		By the injectivity of the last vertical map, and the surjectivity of the first and third ones, we conclude the required surjectivity.
	\end{proof}
	Following the last Lemma, we can compute the cohomological dimension of $G$, using the same proof as in (\cite[Proposition 3.7.6]{neukirch2013cohomology})
	\begin{cor}\label{cohomological dimension}
		Let $G$ be a GPD group of dimension $n$. Then $cd(G)=n$
	\end{cor}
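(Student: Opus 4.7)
The plan is to adapt the classical argument from \cite[Proposition 3.7.6]{neukirch2013cohomology}, using Lemma \ref{Isomorphism for 0} as the essential input. The lower bound is immediate from the definition of a GPD group: since $H^n(G,\F_p)\cong\F_p\neq 0$, we have $\cd(G)\geq n$. For the reverse inequality, since $G$ is pro-$p$, it suffices to show that $H^{n+1}(G,A)=0$ for every $A$ in the category $\mathcal{C}$ of finite modules in ${_p\operatorname{Mod}(G)}$.

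My first step would be to turn the degree-zero duality into right exactness of $H^n(G,-)$ on $\mathcal{C}$. The natural isomorphism $H^0(G,-)\xrightarrow{\sim} H^n(G,(-)^*)^*$ on $\mathcal{C}$ provided by Lemma \ref{Isomorphism for 0} transports the left exactness of $H^0(G,-)$ to left exactness of $H^n(G,(-)^*)^*$. Applying this to a short exact sequence $0\to A\to B\to C\to 0$ in $\mathcal{C}$, dualizing, and using that the Pontryagin dual is exact on finite $\F_p$-vector spaces, yields exactness of
\[
H^n(G,C^*)\to H^n(G,B^*)\to H^n(G,A^*)\to 0.
\]
Since $(-)^*$ is an anti-autoequivalence of $\mathcal{C}$, every short exact sequence in $\mathcal{C}$ arises this way, giving the desired right exactness of $H^n(G,-)$ on $\mathcal{C}$.

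The remaining step is to extend this right exactness to the whole category of discrete $p$-torsion $G$-modules and deduce the vanishing. Every such module is a filtered colimit of objects of $\mathcal{C}$, and continuous cohomology commutes with filtered colimits, so right exactness propagates. Embedding a given $A\in\mathcal{C}$ into an injective discrete $p$-torsion $G$-module $J$ (which satisfies $H^i(G,J)=0$ for $i\geq 1$), the tail of the long exact sequence attached to $0\to A\to J\to J/A\to 0$ reads
\[
H^n(G,J)\to H^n(G,J/A)\to H^{n+1}(G,A)\to 0,
\]
and right exactness of $H^n(G,-)$ forces $H^{n+1}(G,A)=0$, as required. The most delicate point, which I would verify carefully, is the passage from right exactness on $\mathcal{C}$ to right exactness on all discrete $p$-torsion modules; this rests on the standard fact that any short exact sequence of such modules can be written as a filtered colimit of short exact sequences of finite submodules, a routine but essential verification for profinite group cohomology.
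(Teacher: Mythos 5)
Your argument is correct, and its first half coincides with the paper's: both deduce from Lemma \ref{Isomorphism for 0}, together with the left exactness of $H^0(G,-)$ and the exactness of $(-)^*$ on finite $\F_p$-vector spaces, that $H^n(G,-)$ is right exact on finite modules in ${_p\operatorname{Mod}(G)}$. Where you genuinely diverge is in how that right exactness is used to kill $H^{n+1}$. The paper never leaves the world of finite modules: given $x\in H^{n+1}(G,\F_p)$ it picks an open subgroup $U$ with $\operatorname{res}^G_U(x)=0$, identifies $H^{n+1}(U,\F_p)$ with $H^{n+1}(G,\Ind^G_U(\F_p))$ via Shapiro, and applies right exactness to the finite sequence $0\to\F_p\to\Ind^G_U(\F_p)\to A\to 0$ to conclude that $H^{n+1}(G,\F_p)\to H^{n+1}(G,\Ind^G_U(\F_p))$ is injective, whence $x=0$. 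You instead propagate right exactness to infinite modules by filtered colimits and then dimension-shift along an embedding into an acyclic module. Both routes work, but yours carries two extra verifications that the paper's choice of the finite test module $\Ind^G_U(\F_p)$ sidesteps: the (routine, as you say) fact that a short exact sequence of discrete modules killed by $p$ is a filtered colimit of short exact sequences of finite $G$-submodules, and the acyclicity of the module $J$ you embed into. On the latter point, rather than invoking injectives of the $p$-torsion category --- whose acyclicity for cohomology computed in the full category of discrete $G$-modules needs its own justification --- it is cleaner to take $J=\operatorname{Map}_{\mathrm{cont}}(G,A)$, the coinduced module, which is killed by $p$, contains $A$, and is acyclic in positive degrees; with that choice your argument closes. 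Note also that since $G$ is pro-$p$ it suffices to treat $A=\F_p$, so a single such embedding is enough.
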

	\begin{proof}
		We already know that $cd(G)\geq n$ since $H^n(G,\F_p)=\F_p$. Now we want to prove that $H^{n+1}(G,\F_p)=0$. Let $x\in H^{n+1}(G,\F_p)$. Since every continuous map from a profinite group to a finite set projects through some finite quotient, there is an open subgroup $U$ such that $\operatorname{res}^G_U(x)=0$, which means that the map $H^{n+1}(G,\F_p)\to H^{n+1}(G,\operatorname{Ind}^G_U(\F_p))$ sends $x$ to 0. By Lemma \ref{Isomorphism for 0} and the facts that $H^0(G,-)$ is a left exact functor while taking the dual space $(-)^*$ is an exact functor, we conclude that the functor $H^n(G,-)$ is right exact on ${_p\operatorname{Mod}(G)}$. Thus, taking the exact sequence $0\to \F_p\to  \operatorname{Ind}^G_U(\F_p)\to A\to 0$ we get the exact sequence $H^n(G,\operatorname{Ind}^G_U(\F_p))\to H^n(G,A)\to H^{n+1}(G,\F_p)\to H^{n+1}(G,\operatorname{Ind}^G_U(\F_p))$. Since the map $H^n(G,\operatorname{Ind}^G_U(\F_p))\to H^n(G,A)$ is onto we conclude that the map $ H^{n+1}(G,\F_p)\to H^{n+1}(G,\operatorname{Ind}^G_U(\F_p))$ is injective. Thus, $x=0$.
	\end{proof}
	Recall that every pro-$p$ group of finite cohomological group $n$ admits a dualizing module at $p$ (sometimes refered to simply as a dualizing module), $I$, which is defined as $I={\dirlim}_{i\in \mathbb{N}}{\dirlim}_{U\leq o G}H^n(U,\Z/p^i\mathbb{Z})^*$, with the dual maps of the corestrictions, and satisfies $H^n(G,A)^*\cong \operatorname{Hom}_G(A,I)$ for every $p$-torsion $G$- module $A$ (see \cite[Theorem 3.4.4]{neukirch2013cohomology}). Observe that from the duality property of the dualizing module $I$ of a pro-$p$ group of cohomological dimension $n$ one can conclude that $_pI\ne 0$, for example by putting $A=\F_p$. For pro-$p$ PD groups, the dualizing module at $p$ is known to be isomorphic to $\Q_p/\Z_p$. In fact, pro-$p$ PD groups are precisely The pro-$p$ \textit{Duality groups} (see \cite[Theorem 3.4.6]{neukirch2013cohomology} for the definition) for which the dualizing module at $p$ is $\Q_p/\Z_p$. Observe that the action on $\Q_p/\Z_p$ might be nontrivial. In his paper on Demushkin groups of countable rank (\cite{labute1966demuvskin}) Labute has proved that the dualizing module for a Demushkin group of countable rank must be any of the following options: $\Q_p/\Z_p, \Z/p^s$ for every natural number $s$. Moreover, every such option occurs as the dualizing module at $p$ for some Demushkin group of rank $\aleph_0$. This result was extended to Demushkin groups of arbitrary rank in (\cite{bar2024demushkin}). We generalize this result for GPD groups:  
	\begin{prop}\label{dualizing module}
		Let $G$ be a GPD.  Then ${_pI}\cong \mathbb{F}_p$, where ${_pI}$ stands for the submodule of $I$ consists of all elements of order $p$. As a result, $I$ is isomorphic either to  $\mathbb{Q}_p/\Z_p$ or $\mathbb{Z}/p^s$ for some natural number $s$.
	\end{prop}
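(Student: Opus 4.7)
The plan is to split the argument into two stages: first show ${_pI}\cong \F_p$, then deduce the structure of $I$. The starting point is the defining property of the dualizing module, $\operatorname{Hom}_G(A,I)\cong H^n(G,A)^*$, specialized to $A=\F_p$, which immediately gives $({_pI})^G\cong H^n(G,\F_p)^*\cong \F_p$. Combining this duality with Lemma~\ref{Isomorphism for 0} produces the key tool: for any finite $V\in {_p\operatorname{Mod}(G)}$,
\[
\operatorname{Hom}_G(V,{_pI})\cong H^n(G,V)^*\cong H^0(G,V^*)=(V^*)^G\cong (V_G)^*,
\]
so $\dim \operatorname{Hom}_G(V,{_pI})=\dim V_G$, where $V_G$ denotes the module of coinvariants.

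To upgrade $({_pI})^G\cong \F_p$ to ${_pI}\cong \F_p$, I would argue by contradiction. Suppose $x\in {_pI}\setminus ({_pI})^G$. Since ${_pI}$ is a discrete module over the profinite group $G$, the orbit $Gx$ is finite, so $V_0:=\F_p[G]\cdot x$ is a finite $G$-submodule of ${_pI}$. A pro-$p$ group acting on a nonzero finite $p$-module has nonzero fixed points, hence $V_0^G\neq 0$; but $V_0^G\subseteq ({_pI})^G=\F_p$, forcing $V_0^G=\F_p\subsetneq V_0$. Applying the same fixed-point observation to the nonzero quotient $V_0/V_0^G$, I would then find a submodule $V_1\subseteq V_0$ strictly containing $V_0^G$ with $V_1/V_0^G\cong \F_p$. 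Then $V_1$ is a two-dimensional $\F_p$-vector space fitting into an extension $0\to \F_p\to V_1\to \F_p\to 0$; this extension must be non-split, since otherwise $V_1^G=V_1$ would be two-dimensional, contradicting $V_1^G\subseteq ({_pI})^G=\F_p$. A direct check then shows that the augmentation ideal satisfies $\mathfrak{m} V_1 = V_1^G$, so $(V_1)_G\cong \F_p$ and the dimension formula forces $\dim \operatorname{Hom}_G(V_1,{_pI})=1$. The contradiction comes from exhibiting two linearly independent maps $V_1\to {_pI}$: the inclusion $\iota$, whose restriction to $V_1^G$ is nonzero, and the composite $V_1\twoheadrightarrow V_1/V_1^G\cong \F_p\hookrightarrow ({_pI})^G\subseteq {_pI}$, which vanishes on $V_1^G$. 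Hence ${_pI}=({_pI})^G\cong \F_p$.

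For the structural consequence, $I$ is a $p$-primary abelian group (by its definition as a direct limit of $p^i$-torsion modules) with one-dimensional $p$-socle. A short induction on $s$ using the exact sequence $0\to {_pI}\to {_{p^{s+1}}I}\xrightarrow{\,\cdot p\,}{_{p^s}I}$ shows each ${_{p^s}I}$ is cyclic of order at most $p^s$, and the increasing union $I=\bigcup_s {_{p^s}I}$ is therefore either a finite cyclic $\Z/p^s$ (if the sizes eventually stabilize) or the Pr\"ufer group $\Q_p/\Z_p$ (otherwise). The main obstacle will be the middle paragraph: producing the two-dimensional submodule $V_1$ inside ${_pI}$ and identifying the two competing Homs to trigger the dimension contradiction. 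Once the formula $\dim \operatorname{Hom}_G(V,{_pI})=\dim V_G$ is in hand, everything else is bookkeeping with socles and coinvariants.
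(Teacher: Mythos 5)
Your argument is correct, but it takes a genuinely different route from the paper. The paper's proof starts from the explicit description ${_pI}\cong \dirlim_{U\leq_o G}H^n(U,\F_p)^*$ of \cite[Corollary 3.4.7]{neukirch2013cohomology} and reduces everything to showing $H^n(U,\F_p)\cong\F_p$ for every open subgroup $U$, which it gets from Shapiro's lemma combined with Lemma \ref{Isomorphism for 0} and Lemma \ref{induced commutes with pontrygain} (the compatibility of $\operatorname{Ind}^G_U$ with Pontryagin duals). You instead stay entirely at the level of the $G$-module structure of ${_pI}$: from the defining duality $\operatorname{Hom}_G(A,I)\cong H^n(G,A)^*$ together with Lemma \ref{Isomorphism for 0} you extract the dimension formula $\dim\operatorname{Hom}_G(V,{_pI})=\dim V_G$ for finite $V$ killed by $p$, and then rule out ${_pI}\supsetneq({_pI})^G$ by producing a two-dimensional non-split submodule $V_1$ whose coinvariants are one-dimensional while it admits two independent $G$-maps into ${_pI}$. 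I checked the details of that middle step ($V_1^G=V_0^G$ is one-dimensional, the augmentation ideal carries $V_1$ onto $V_1^G$, and the two maps $\iota$ and the projection-followed-by-inclusion are $G$-equivariant and independent because exactly one of them kills $V_1^G$) and they all go through. Your route has the advantage of not needing Lemma \ref{induced commutes with pontrygain} at all, and of isolating the clean statement $\dim\operatorname{Hom}_G(V,{_pI})=\dim V_G$, which is a reusable form of the duality. What it does not give you, and what the paper's computation provides as a byproduct, is the fact that $H^n(U,\F_p)\cong\F_p$ for every open subgroup $U$ of $G$; the paper relies on exactly this in the subsequent corollary on closed subgroups of infinite index and again in the proof of Theorem \ref{GPD extension yeilds Demushkin}, so if one adopted your proof one would still need to prove that statement separately. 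Your concluding reduction from ${_pI}\cong\F_p$ to $I\cong\Q_p/\Z_p$ or $\Z/p^s$ (a $p$-primary group with one-dimensional socle is cocyclic) is standard and matches what the paper leaves implicit.
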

	First we need the following Lemma: 
	\begin{lem}\label{induced commutes with pontrygain}
		Let $A$ be a finite $U$ module in ${_p\operatorname{Mod}(U)}$, for an open subgroup $U\leq_o G$. Then there is a natural isomorphism $\operatorname{Ind}^G_U(A^*)\cong {\operatorname{Ind}^G_U(A)}^*$. 
	\end{lem}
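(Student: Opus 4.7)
The plan is to exhibit a natural nondegenerate $G$-equivariant pairing between $\operatorname{Ind}^G_U(A^*)$ and $\operatorname{Ind}^G_U(A)$, and then invoke a cardinality argument. Using the standard function-space description, $\operatorname{Ind}^G_U(A)$ consists of the continuous maps $f:G\to A$ with $f(ug)=uf(g)$ for all $u\in U$, equipped with the action $(h\cdot f)(x)=f(xh)$. Since $U$ is open of finite index and $A$ is finite, this module is finite and, after choosing a transversal $T$ for $U$ in $G$, abstractly isomorphic to $A^{[G:U]}$; in particular $|\operatorname{Ind}^G_U(A^*)|=|\operatorname{Ind}^G_U(A)^*|$.

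The natural pairing is
\[
\langle\,\cdot\,,\,\cdot\,\rangle:\operatorname{Ind}^G_U(A^*)\times\operatorname{Ind}^G_U(A)\to\F_p,\qquad
\langle\phi,f\rangle=\sum_{g\in T}\phi(g)(f(g)).
\]
The first step is to check this is independent of the transversal $T$. Replacing $g\in T$ by $ug$ gives $\phi(ug)(f(ug))=(u\phi(g))(uf(g))=\phi(g)(u^{-1}uf(g))=\phi(g)(f(g))$, using the definition of the $U$-action on $A^*$; so each summand depends only on the coset, and the sum is well defined.

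The second step is to verify that the induced map $\Phi:\operatorname{Ind}^G_U(A^*)\to\operatorname{Ind}^G_U(A)^*$, $\phi\mapsto\langle\phi,\,\cdot\,\rangle$, is $G$-equivariant. This is a direct change-of-variables computation: $\Phi(h\phi)(f)=\sum_{g\in T}\phi(gh)(f(g))=\sum_{g'\in Th}\phi(g')(f(g'h^{-1}))=\Phi(\phi)(h^{-1}\cdot f)=(h\cdot\Phi(\phi))(f)$, where $Th$ is again a transversal. Naturality in $A$ is clear from the definition.

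The last step is injectivity, which is immediate: if $\phi\ne 0$, pick $g_0\in T$ with $\phi(g_0)\ne 0$ and $a\in A$ with $\phi(g_0)(a)\ne 0$; then the unique $f\in\operatorname{Ind}^G_U(A)$ supported on the coset $Ug_0$ with $f(g_0)=a$ satisfies $\Phi(\phi)(f)\ne 0$. Since both sides have the same finite cardinality, $\Phi$ is an isomorphism. There is no real obstacle here; the only care needed is the bookkeeping of the contragredient $G$-action when comparing it with the $G$-action on $\operatorname{Ind}^G_U(A^*)$, which, as a conceptual sanity check, can alternatively be seen from the tensor-hom identification $(\F_p[G]\otimes_{\F_p[U]}A)^*\cong\operatorname{Hom}_{\F_p[U]}(\F_p[G],A^*)=\operatorname{Coind}^G_U(A^*)$, combined with $\operatorname{Coind}^G_U=\operatorname{Ind}^G_U$ for $U$ open of finite index.
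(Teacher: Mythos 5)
Your proposal is correct and follows essentially the same route as the paper: the same evaluation pairing $\sum_{g\in T}\phi(g)(f(g))$, the same verifications of independence of the transversal, $G$-equivariance, and injectivity via a function supported on a single coset, finished by a cardinality comparison. The only (harmless) difference is that you count $|\operatorname{Ind}^G_U(A)|=|A|^{[G:U]}$ directly from a transversal, whereas the paper proves $|\operatorname{Ind}^G_U(A^*)|=|{\operatorname{Ind}^G_U(A)}^*|$ by induction on $|A|$ using exactness of $(-)^*$ and $\operatorname{Ind}^G_U(-)$; your count is the more direct of the two.
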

	\begin{proof}
		First we construct a natural homomorphism $F:\operatorname{Ind}^G_U(A^*)\to  {\operatorname{Ind}^G_U(A)}^*$. For every $\varphi\in \operatorname{Ind}^G_U(A^*)$ and $\psi\in \operatorname{Ind}^G_U(A)$ define $F(\varphi)(\psi)=\sum \varphi(g_i)(\psi(g_i))$ where $\{g_i\}$ is a set of representatives of $U$ in $G$. One checks that it doesn't depend on the choice of such a set. Indeed, let $\{u_ig_i\}$ be another set of representatives. Then $$F(\varphi)(\psi)=\sum \varphi(u_ig_i)(\psi(u_ig_i))=\sum u_i\varphi(g_i)(u_i\psi(g_i))=\sum u_iu_i^{-1}\varphi(g_i)(\psi(g_i))=\sum\varphi(g_i)(\psi(g_i))$$ Now we show that this is indeed a $G$-map. Let $x\in G$. $$xF(\varphi)(\psi)=F(\varphi)(x^{-1}\psi)=\sum \varphi(g_i)(x^{-1}\psi(g_i))$$ $$=\sum \varphi(g_i)(\psi(x^{-1}g_i))=\sum \varphi(xg_i)(\psi(g_i))=\sum x\varphi(g_i)(\psi(g_i))=F(x\varphi)(\psi)$$ Next we show that $F$ is injective. Let $\varphi_1\ne \varphi_2\in \operatorname{Ind}^G_U(A^*)$. There exists some $g_i$ such that $\varphi_1(g_i)\ne \varphi_2(g_i)$. Hence there exists some $a\in A$ such that $\varphi_1(g_i)(a)\ne \varphi_2(g_i)(a)$. Construct $\psi\in \operatorname{Ind}^G_U(A) $ by $\psi(g_iU)=a$ and the zero function elsewhere. Then $F(\varphi_1)(\psi)\ne F(\varphi_2)(\psi)$. We left to show that $F$ is surjective. It is equivalent to show that $|\operatorname{Ind}^G_U(A^*)|=|{\operatorname{Ind}^G_U(A)}^*|$. We prove by induction on the size of $A$. For $A=\F_p$ this is immediate. Now assume that $|A|>p$ and look at the exact sequence $$0\to A_0\to A \to \F_p\to 0$$ Since $(-)^*$ and $\operatorname{Ind}^G_U(-)$ are both exact functors, we get two exact sequences $$0\to \operatorname{Ind}^G_U(\F_p)^*\to \operatorname{Ind}^G_U(A)^*\to   \operatorname{Ind}^G_U(A_0)^*\to 0$$ and $$0\to \operatorname{Ind}^G_U(\F_p^*)\to \operatorname{Ind}^G_U(A^*)\to   \operatorname{Ind}^G_U(A_0^*)\to 0$$ Hence $|\operatorname{Ind}^G_U(A^*)|=|\operatorname{Ind}^G_U(A_0^*)|\cdot |\operatorname{Ind}^G_U(\F_p^*)|$ and $|\operatorname{Ind}^G_U(A)^*|=|\operatorname{Ind}^G_U(A_0)^*|\cdot |\operatorname{Ind}^G_U(\F_p)^*|$ and by induction hypothesis we are done.
	\end{proof}
	\begin{proof}[Proof of Proposition \ref{dualizing module}]
		By \cite[Corollary 3.4.7]{neukirch2013cohomology}, ${_pI}\cong {\dirlim}_{U\leq o G} H^n(U,\F_p)^*$. Since $_pI$ cannot be trivial, it is enough to show that $H^n(U,\F_p)^*\cong \F_p$. Indeed, $\F_p\cong H^0(U,\F_p)\cong H^0(G,\operatorname{Ind}_U^G(\F_p))\cong H^n(G,(\operatorname{Ind}_U^G(\F_p))^*)^*\cong H^n(G,\operatorname{Ind}_U^G({\F_p}^*))^*\cong H^n(U,{F_p}^*)^*\cong H^n(U,\F_p)^*$. The third isomorphism follows from Proposition \ref{Isomorphism for 0} while the fourth one follows from Lemma \ref{induced commutes with pontrygain}.
	\end{proof}
	\begin{cor}
		A closed subgroup of infinite index of a GPD group of dimension  $n$ has cohomological dimension $<n$.
	\end{cor}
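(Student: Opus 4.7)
The goal is to show $\operatorname{cd}(H) < n$. Since $H$ is a closed subgroup of the pro-$p$ group $G$ it is itself pro-$p$, and in the pro-$p$ category the inequality $\operatorname{cd}(H) \leq n-1$ is equivalent to $H^n(H,\F_p) = 0$. My plan is to present the latter as the filtered colimit
\[
H^n(H,\F_p) \;=\; \operatorname{colim}_{U\leq_o G,\, U\supseteq H} H^n(U,\F_p),
\]
with the transitions being restrictions, and to show that every class is eventually killed by further restriction. The proof of Proposition \ref{dualizing module} already shows that each $H^n(U,\F_p)$ is a single copy of $\F_p$.

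The crux is a vanishing lemma for restrictions. Starting from the identification ${_pI}\cong\operatorname{colim}_V H^n(V,\F_p)^*$ used in the proof of Proposition \ref{dualizing module}, and from the fact that both this colimit and each of its terms are one-dimensional over $\F_p$, a pointwise argument shows that the open subgroups $V$ for which the structural map $H^n(V,\F_p)^*\to{_pI}$ is an isomorphism form a nonempty, downward-closed family $\mathcal{V}$ (if $V\in\mathcal{V}$ and $W\leq V$ is open, then the factorization $\phi_V = \phi_W\circ\operatorname{cor}^*$ forces both $\phi_W$ and the transition $\operatorname{cor}^*$ to be isomorphisms). Dualizing inside $\mathcal{V}$ gives that all corestrictions $\operatorname{cor}\colon H^n(W,\F_p)\to H^n(V,\F_p)$ for $W\leq V$ in $\mathcal{V}$ are isomorphisms. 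Combined with the identity $\operatorname{cor}\circ\operatorname{res} = [V\colon W]\cdot\operatorname{id}$ on $H^n(V,\F_p)$ and the observation that $[V\colon W]$ is a positive power of $p$, hence zero on $\F_p$, whenever $W\lneq V$, this forces $\operatorname{res}\colon H^n(V,\F_p)\to H^n(W,\F_p)$ to vanish for every such pair in $\mathcal{V}$.

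To conclude, given a class $x\in H^n(H,\F_p)$ represented by some $x_U\in H^n(U,\F_p)$ with $H\subseteq U\leq_o G$, I first replace $U$ by $U\cap V_0$ for any $V_0\in\mathcal{V}$, which is still open and contains $H$, so that I may assume $U\in\mathcal{V}$. Since $[G\colon H] = \infty$ while $U$ has finite index in $G$, $[U\colon H] = \infty$, and a standard profinite argument (using $HV_i$ for a neighborhood basis of open normal subgroups of $U$) produces an open $W$ with $H\subseteq W\subsetneq U$. Downward closure of $\mathcal{V}$ gives $W\in\mathcal{V}$, and the vanishing lemma forces $\operatorname{res}(x_U)\in H^n(W,\F_p)$ to be zero. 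Hence $x = 0$ in the colimit and $H^n(H,\F_p) = 0$.

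The step I expect to require the most care is the extraction of the cofinal family $\mathcal{V}$: the $\F_p$-line pointwise analysis is elementary, but a little bookkeeping is required because the indexing poset of open subgroups under reverse inclusion is only directed, not linearly ordered. An alternative route would be to compute $H^n(G,\operatorname{Ind}^G_H\F_p)^*\cong\operatorname{Hom}_G(\operatorname{Ind}^G_H\F_p,I)$ via Shapiro's lemma and adjointness, but the colimit approach dovetails most cleanly with the dualizing-module machinery already set up in the paper, and it directly generalizes the classical argument for closed subgroups of infinite index in pro-$p$ PD groups.
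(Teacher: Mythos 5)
Your overall strategy is the same as the paper's: write $H^n(H,\F_p)$ as the filtered colimit of the groups $H^n(U,\F_p)\cong\F_p$ over open $U\supseteq H$, and kill every class by showing that restriction between distinct nested open subgroups vanishes in degree $n$, which follows from $\operatorname{cor}\circ\operatorname{res}=[V:W]=0$ once one knows $\operatorname{cor}\colon H^n(W,\F_p)\to H^n(V,\F_p)$ is surjective. The difference is where you get that surjectivity, and that is where there is a genuine gap. Your dualizing-module argument only produces a nonempty, downward-closed family $\mathcal{V}$ of open subgroups on which the structural maps to ${_pI}$ are isomorphisms; concretely, all you know is that $\mathcal{V}$ contains every open subgroup of a single, uncontrolled $V_0\leq_o G$. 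The assertion that $U\cap V_0$ ``is still open and contains $H$'' is false in general: nothing in your construction forces $V_0\supseteq H$, and since $H$ has infinite index the open subgroups containing $H$ are not cofinal among all open subgroups, so you cannot rechoose $V_0$ to contain $H$ after the fact (in a filtered colimit of lines the early structural maps may well be zero, so $\mathcal{V}$ really can miss every $U\supseteq H$ as far as your argument shows). Consequently your vanishing lemma applies to pairs $W\lneq V$ inside $\mathcal{V}$, but you have not produced a single such pair with $H\subseteq W$, which is exactly what the concluding colimit argument needs.

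The repair is the input the paper uses and that your detour was designed to avoid: by Corollary \ref{cohomological dimension} every open subgroup of $G$ has cohomological dimension $n$, and then \cite[p.~I-20, Lemma~4]{serre1979galois} gives surjectivity of $\operatorname{cor}\colon H^n(W,\F_p)\to H^n(V,\F_p)$ for \emph{every} pair of nested open subgroups. Since both groups are $\F_p$ by the proof of Proposition \ref{dualizing module}, corestriction is then bijective, so $\operatorname{res}=0$ whenever $W\lneq V$, and your final paragraph goes through verbatim with $\mathcal{V}$ replaced by the set of all open subgroups. (Alternatively one could upgrade $\mathcal{V}$ to the set of all open subgroups via the natural identification $H^n(U,\F_p)^*\cong({_pI})^U={_pI}$ coming from the defining property of the dualizing module together with triviality of the pro-$p$ action on $\F_p$, but verifying that this identification agrees with the structural maps of the colimit is extra work you did not carry out, and it is not cheaper than citing Serre's lemma.)
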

	\begin{proof}
		This is the same proof that appears in \cite{labute1966demuvskin} for countably ranked Demushkin groups. Let $H$ be a closed subgroup of infinite index in $G$. Then $H=\bigcap U_i$ the intersection of infinite strictly decreasing direct system. Hence $H^n(H,\F_p)=\dirlim H^n(U_i,\F_p)$. Let $U_j<U_i$. Then $\operatorname{cor}^{U_j}_{U_i}\circ\operatorname{res}^{U_i}_{U_j}:H^n(U_i,\F_p)\to H^n(U_i,\F_p)=[U_i:U_j]=0$. However, since by Corollary \ref{cohomological dimension} $cd(G)=n$, the same holds for every open subgroup of $G$, hence \cite[p. I-20, Lemma 4]{serre1979galois} implies that $\operatorname{cor}^{U_j}_{U_i}$ is surjective. By the proof of Proposition \ref{dualizing module}, $H^n(U_i,\F_p)\cong \F_p$ for every $i$, and hence the corestriction maps are also bijective. Thus $\operatorname{res}^{U_i}_{U_j}=0$.
	\end{proof}

	\section{Demushkin groups and extensions}

	For finitely generated Demushkin groups we have the following equivalence criteria:
	\begin{thm}(\cite[Theorem 3.7.2]{neukirch2013cohomology})
		Let $G$ be a finitely generated pro-$p$ group. The following conditions are equivalent:
		\begin{enumerate}
			\item $G$ is a Demushkin group.
			\item $\operatorname{cd}(G)=2$ and $I\cong \Q_p/\Z_p$.
			\item $\operatorname{cd}(G)=2$ and ${_pI}\cong \F_p$.
		\end{enumerate}
	\end{thm}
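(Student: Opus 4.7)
The plan is to prove the implications $(1) \Rightarrow (2) \Rightarrow (3) \Rightarrow (1)$ cyclically, with essentially all of the substance concentrated in the third. The implication $(2) \Rightarrow (3)$ is immediate, since the $p$-torsion of $\Q_p/\Z_p$ is $\F_p$. For $(1) \Rightarrow (2)$, a finitely generated Demushkin group is a pro-$p$ Poincar\'e duality group of dimension $2$ in the classical sense: $\cd(G) = 2$ follows from Corollary \ref{cohomological dimension}, and the standard identification of the dualizing module of a finitely generated pro-$p$ PD group gives $I \cong \Q_p/\Z_p$. Alternatively one can reach this from Proposition \ref{dualizing module} (which already narrows $I$ to $\Q_p/\Z_p$ or $\Z/p^s$) by a Bockstein argument: applied to $0 \to \Z/p \to \Z/p^{i+1} \to \Z/p^i \to 0$, together with $H^3(U,\F_p)=0$ for every open $U \leq_o G$, it yields surjective transition maps $H^2(U,\Z/p^{i+1}) \twoheadrightarrow H^2(U,\Z/p^i)$, dualising to the divisibility of $I$ and ruling out $\Z/p^s$.

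For $(3) \Rightarrow (1)$, the easier half is to show that $\dim H^2(G,\F_p) = 1$. The defining adjunction of the dualizing module gives $H^2(G,\F_p)^* \cong \operatorname{Hom}_G(\F_p, I) = ({_pI})^G$. Since $\operatorname{Aut}({_pI}) \cong \operatorname{Aut}(\F_p)$ has order $p-1$ coprime to $p$ while $G$ is pro-$p$, the action of $G$ on the one-dimensional $\F_p$-space ${_pI}$ is forced to be trivial, so $({_pI})^G = {_pI} \cong \F_p$.

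The main obstacle is the nondegeneracy of the cup product pairing $H^1(G,\F_p) \times H^1(G,\F_p) \to H^2(G,\F_p)$. I would deduce this from the stronger claim that $G$ is a $p$-duality group of dimension $2$ in the sense of \cite[Section 3.4]{neukirch2013cohomology}, which would yield a natural isomorphism $H^i(G,A)^* \cong H^{2-i}(G,A^*)$ for every finite $G$-module $A$ killed by $p$, compatible with the cup product. Verifying the $p$-duality hypothesis requires two ingredients: coeffaceability of $H^2(G,-)$ on the category of finite $p$-modules, which was already established in the proof of Corollary \ref{cohomological dimension}, and finiteness of $H^i(G,A)$ for finite $p$-primary $A$, which follows from finite generation of $G$ together with $\cd(G) < \infty$ by a standard dimension-shifting argument. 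Once the $p$-duality is in hand, applying it with $A = \F_p$ and $i = 1$ produces the required perfect pairing on $H^1(G,\F_p)$, completing the argument that $G$ is Demushkin.
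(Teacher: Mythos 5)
The paper does not prove this statement; it is quoted as \cite[Theorem 3.7.2]{neukirch2013cohomology}, so your attempt can only be measured against the standard proof there. Your $(2)\Rightarrow(3)$ is trivially correct, $(1)\Rightarrow(2)$ is fine (the Bockstein argument for $p$-divisibility of $I$ is essentially how Labute and NSW rule out the finite dualizing modules), and the first half of $(3)\Rightarrow(1)$ is correct and is the standard argument: the pro-$p$ group $G$ must act trivially on ${_pI}\cong\F_p$ since $\operatorname{Aut}(\F_p)$ has order prime to $p$, whence $H^2(G,\F_p)^*\cong\operatorname{Hom}_G(\F_p,I)\cong({_pI})^G\cong\F_p$.

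The gap is in the nondegeneracy step of $(3)\Rightarrow(1)$. The criterion for $G$ to be a duality group at $p$ of dimension $2$ (\cite[Theorem 3.4.6]{neukirch2013cohomology}) is the vanishing of $D_i(G,\Z/p^r)={\dirlim}_U H^i(U,\Z/p^r)^*$ for $i=0,1$, i.e.\ coeffaceability of $H^0(G,-)$ \emph{and} of $H^1(G,-)$ over all open subgroups --- not ``coeffaceability of $H^2(G,-)$ plus finiteness.'' The case $i=0$ is free for any infinite pro-$p$ group, but the case $i=1$ is precisely the substantive content: for a finitely generated pro-$p$ group with $\cd(G)=2$ and $H^2(G,\F_p)\cong\F_p$, being a duality group of dimension $2$ is \emph{equivalent} to being Demushkin, so invoking the duality isomorphism without verifying $D_1=0$ assumes what is to be proved. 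Moreover, the property you point to in the proof of Corollary \ref{cohomological dimension} is right-exactness of $H^n(G,-)$, not coeffaceability, and it is obtained there from Lemma \ref{Isomorphism for 0}, whose proof uses the nondegeneracy of the cup product --- a hypothesis unavailable in this direction. The standard repair bypasses duality groups: the functorial isomorphism $H^2(U,\F_p)^*\cong\operatorname{Hom}_U(\F_p,I)=({_pI})^U$ identifies each corestriction $\operatorname{cor}\colon H^2(U,\F_p)\to H^2(G,\F_p)$ with the dual of the inclusion $({_pI})^G\hookrightarrow({_pI})^U$, hence an isomorphism $\F_p\to\F_p$; since $\operatorname{cor}\circ\operatorname{res}=[G:U]=0$ on $\F_p$-cohomology, restriction to any proper open subgroup kills $H^2(G,\F_p)$, and one then extracts the surjectivity of $\cup\,\chi\colon H^1(G,\F_p)\to H^2(G,\F_p)$ for each $0\ne\chi\in H^1(G,\F_p)$ from the exact sequence attached to the index-$p$ subgroup $\ker\chi$. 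Some such argument must replace your appeal to the duality-group property.
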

	For Demushkin groups of arbitrary rank we already know that the dualizing module may not be isomorphic to $\mathbb{Q}_p/\mathbb{Z}_p$. However, in (\cite{bar2024demushkin}) it has been shown that the two remaining conditions are still equivalent. I.e, we have the following theorem: 
	\begin{thm}(\cite[Theorem 31]{bar2024demushkin})
		Let $G$ be a pro-$p$ group of arbitrary rank. The following conditions are equivalent:
		\begin{enumerate}
			\item $G$ is a Demushkin group.
			\item $\operatorname{cd}(G)=2$ and ${_pI}\cong \F_p$.
		\end{enumerate}
	\end{thm}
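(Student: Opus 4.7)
The plan is to prove the two implications separately, with the converse being the substantive direction. For $(1)\Rightarrow (2)$: a Demushkin group is by definition a GPD group of dimension $2$, so Corollary \ref{cohomological dimension} yields $\cd(G)=2$ and Proposition \ref{dualizing module} yields ${_pI}\cong \F_p$. This direction is immediate from the machinery of Section~1.

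For $(2)\Rightarrow (1)$, assume $\cd(G)=2$ and ${_pI}\cong \F_p$. First I compute $H^2(G,\F_p)$ using the defining duality property of $I$: since $\F_p$ is $p$-torsion, $H^2(G,\F_p)^*\cong \operatorname{Hom}_G(\F_p,I)=\operatorname{Hom}_G(\F_p,{_pI})$, and because $G$ is pro-$p$ acting on an order-$p$ module the action is forced to be trivial, giving $H^2(G,\F_p)\cong \F_p$. It remains to establish nondegeneracy of the cup product $H^1(G,\F_p)\times H^1(G,\F_p)\to H^2(G,\F_p)$. Given $0\neq x\in H^1(G,\F_p)$, I view $x$ as the class of a nonsplit extension $0\to \F_p\to E_x\to \F_p\to 0$ of $G$-modules, so that cup product with $x$ agrees with the connecting homomorphism $\partial$ of the associated long exact sequence. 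Because $\cd(G)=2$, this sequence terminates as
\[
H^1(G,\F_p)\xrightarrow{\partial} H^2(G,\F_p)\xrightarrow{\alpha} H^2(G,E_x)\to H^2(G,\F_p)\to 0,
\]
so producing some $y$ with $x\cup y\neq 0$ is equivalent to showing $\alpha=0$.

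To finish, I dualize $\alpha$ via the dualizing module to the map $\alpha^*:\operatorname{Hom}_G(E_x,I)\to \operatorname{Hom}_G(\F_p,I)$, which is simply restriction along the inclusion $\F_p\hookrightarrow E_x$. Since $E_x$ is $p$-torsion, every $\phi\in \operatorname{Hom}_G(E_x,I)$ factors through ${_pI}\cong \F_p$; I then claim each such $\phi$ vanishes on $\F_p\subset E_x$, for otherwise $\phi|_{\F_p}$ would be an isomorphism and $\ker(\phi)$ would provide a $G$-equivariant complement, forcing the extension to split and contradicting $x\neq 0$. Hence $\alpha^*=0$, so $\alpha=0$, so $\partial$ is nonzero, and its image is then a nonzero subspace of $H^2(G,\F_p)\cong \F_p$, hence all of it. I expect the only real subtlety to be carefully justifying the identification of cup product with $x$ as the Yoneda connecting morphism and verifying that $\alpha$ and $\alpha^*$ are genuinely dual under the functor $\operatorname{Hom}_G(-,I)$; once these standard identifications are in place, the indecomposability argument for $E_x$ closes the proof efficiently.
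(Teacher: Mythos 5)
This statement is quoted from \cite[Theorem 31]{bar2024demushkin} and the present paper gives no proof of it, so there is no internal argument to compare against; I can only assess your proposal on its own terms, and it is correct. Your $(1)\Rightarrow(2)$ direction is fine: once $H^2(G)\cong\F_p$, the pairings for $i=0,2$ are automatically nondegenerate, so a Demushkin group is a GPD group of dimension $2$ and Corollary \ref{cohomological dimension} and Proposition \ref{dualizing module} apply. For $(2)\Rightarrow(1)$, your computation $H^2(G,\F_p)^*\cong\operatorname{Hom}_G(\F_p,{_pI})\cong\F_p$ is right (a pro-$p$ group must act trivially on a module of order $p$), and the main argument --- realizing $x\ne 0$ as a nonsplit extension $0\to\F_p\to E_x\to\F_p\to 0$, identifying $-\cup x$ with the connecting map, and killing $\alpha$ by dualizing into $I$ and observing that a $\phi\in\operatorname{Hom}_G(E_x,I)$ nonvanishing on the sub-$\F_p$ would split the extension via $\ker\phi$ --- is sound. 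This is essentially the classical Labute--Serre mechanism (also underlying \cite[Theorem 3.7.2]{neukirch2013cohomology}), adapted so that it never uses finite generation; its virtue is that it only needs the representability $H^2(G,A)^*\cong\operatorname{Hom}_G(A,I)$ for \emph{finite} $A$, which survives in arbitrary rank. The two points you flag do need to be written out --- the naturality in $A$ of the duality isomorphism (so that $\alpha^*$ really is restriction along $\F_p\hookrightarrow E_x$; this is part of \cite[Theorem 3.4.4]{neukirch2013cohomology}) and the sign-ambiguous identification of the cup product with the connecting homomorphism --- but neither affects the nonvanishing conclusion, and anticommutativity of the cup product on $H^1$ gives nondegeneracy in both variables. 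No gap.
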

	We use this criterion to prove the following theorem:
	\begin{thm}\label{GPD extension yeilds Demushkin}
		Let $1\to H\to G\to G/H\to 1$ be an exact sequence of pro-$p$ groups such that $G$ is a GPD group of dimension $n+2$. 
		\begin{enumerate}
			\item If $H$ is a GPD group of dimension $n$ and $\cd(G/H)<\infty$ then $G/H$ is a Demushkin group.
			\item If $G/H$ is a GPD group of dimension $n$ and $|H^m(H,\F_p)|<\infty$ for $m=\cd(H)$ then $H$ is a Demushkin group.
		\end{enumerate} 
	\end{thm}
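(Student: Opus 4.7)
My plan is to verify, in each of the two settings, the criterion for being Demushkin given just above the theorem, namely $\cd(K)=2$ together with $_pI_K\cong\F_p$. The main tool is the Lyndon-Hochschild-Serre spectral sequence $E_2^{p,q}=H^p(G/H,H^q(H,\F_p))\Rightarrow H^{p+q}(G,\F_p)$ applied both to the given extension and to the induced extensions of open subgroups, supplemented by the dualized form of Lemma \ref{Isomorphism for 0} (for a GPD group $K$ of dimension $d$ and finite $B\in{_p\operatorname{Mod}(K)}$, $H^d(K,B)\cong H^0(K,B^*)^*$) and by the fact, extracted from the proof of Proposition \ref{dualizing module}, that $H^d(U,\F_p)\cong \F_p$ for every open subgroup $U$ of a GPD group of dimension $d$. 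Note also that, since $G/H$ is pro-$p$ and $|\F_p^\times|$ is coprime to $p$, $G/H$ acts trivially on $H^n(H,\F_p)\cong \F_p$ in part (1).

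For (1), $\cd(G)\leq \cd(H)+\cd(G/H)$ gives $\cd(G/H)\geq 2$. Writing $k=\cd(G/H)<\infty$, the term $E_2^{k,n}=H^k(G/H,\F_p)$ has no incoming differentials (since $H^{n+r-1}(H,\F_p)=0$ for $r\geq 2$) and no outgoing ones (since $H^{k+r}(G/H,-)=0$ for $r\geq 1$), so it survives to $E_\infty^{k,n}$; if $k>2$ this would be nonzero and contribute to $H^{k+n}(G,\F_p)=0$, a contradiction. For $_pI_{G/H}$, given an open subgroup $U\leq G/H$ set $W=\pi^{-1}(U)$; the same collapse applied to $1\to H\to W\to U\to 1$ yields $\F_p\cong H^{n+2}(W,\F_p)\cong E_\infty^{2,n}\cong H^2(U,\F_p)$. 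Since the corestrictions between these copies of $\F_p$ are surjective by \cite[p.~I-20, Lemma 4]{serre1979galois}, they are isomorphisms, so $_pI_{G/H}\cong \dirlim H^2(U,\F_p)^*\cong \F_p$ and the criterion yields that $G/H$ is Demushkin.

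For (2), symmetric reasoning establishes $\cd(H)=2$: writing $m=\cd(H)$, the dualized Lemma \ref{Isomorphism for 0} applied to the GPD group $G/H$ gives $E_2^{n,m}\cong H^0(G/H,(H^m(H,\F_p))^*)^*$, and since $(H^m(H,\F_p))^*$ is a nonzero finite $p$-torsion $G/H$-module and every such module has nonzero invariants under a pro-$p$ group, $E_2^{n,m}\neq 0$; the same differential bookkeeping then forces $m\leq 2$. To compute $_pI_H$ I want $H^2(U,\F_p)\cong \F_p$ for every open $U\leq H$. Finiteness of $H^2(U,\F_p)$ follows by Shapiro (so that $H^2(U,\F_p)\cong H^2(H,\operatorname{Ind}_U^H \F_p)$) together with an induction on $|A|$ using the short exact sequences $0\to A_0\to A\to \F_p\to 0$ and $\cd(H)=2$. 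Given such a $U$, pick an open subgroup $W\leq G$ with $W\cap H=U$, let $V_1\leq \pi(W)$ be an open subgroup contained in the kernel of the continuous action of $\pi(W)$ on the finite module $H^2(U,\F_p)$, and replace $W$ by $W_1=W\cap \pi^{-1}(V_1)$: since $\pi^{-1}(V_1)\supseteq H$, one still has $W_1\cap H=U$, while $\pi(W_1)=V_1$ now acts trivially on $H^2(U,\F_p)$. Applying the spectral sequence to $1\to U\to W_1\to V_1\to 1$ collapses it to $H^{n+2}(W_1,\F_p)\cong H^n(V_1,H^2(U,\F_p))$; since $V_1$ acts trivially and $H^n(V_1,-)$ commutes with direct sums (with $H^n(V_1,\F_p)\cong \F_p$ as $V_1$ is open in the GPD $G/H$), the right side is just $H^2(U,\F_p)$, forcing $H^2(U,\F_p)\cong \F_p$. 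The usual direct-limit argument then yields $_pI_H\cong \F_p$.

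The main obstacle is this last duality step in (2): applying the duality directly to the original extension only shows that the $G/H$-coinvariants of $H^2(H,\F_p)$ are one-dimensional, which over a pro-$p$ quotient with a possibly nontrivial unipotent action is strictly weaker than one-dimensionality of $H^2(H,\F_p)$ itself. The resolution is to exploit the freedom to shrink $V=\pi(W)$ arbitrarily without disturbing $W\cap H$: once $V$ lies inside the kernel of the action on the finite $H^2(U,\F_p)$, the computation reduces to the trivial-action case, where everything collapses. Verifying finiteness of $H^2(U,\F_p)$ for arbitrary open $U\leq H$ and writing out the spectral-sequence collapse carefully are the routine but necessary side tasks.
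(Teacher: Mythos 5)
Your proposal is correct, and part (1) follows the paper's proof essentially verbatim (same spectral-sequence degeneration for $\cd(G/H)=2$, same collapse $H^{n+2}(W,\F_p)\cong H^2(U,\F_p)$ for open subgroups to get $_pI\cong\F_p$). In part (2) the skeleton is also the same, but your treatment of the dualizing module genuinely diverges at exactly the obstacle you identify. The paper applies the spectral sequence to $1\to U\cap H\to U\to UH/H\to 1$ with $U$ ranging over open subgroups of $G$, obtains $H^n(UH/H,H^2(U\cap H,\F_p))\cong\F_p$, and then dualizes via Lemmas \ref{Isomorphism for 0} and \ref{induced commutes with pontrygain} to conclude only that $\bigl(H^2(U\cap H,\F_p)^*\bigr)^{UH/H}\cong\F_p$; to finish it must pass to a smaller $V$ on which the action is trivial and invoke the equivariance of the dualized corestriction maps (the paper's closing Remark) so that the images in the direct system have order at most $p$. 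You instead shrink the quotient \emph{before} running the spectral sequence, so that $V_1$ acts trivially on the finite module $H^2(U,\F_p)$; then $E_2^{n,2}\cong H^n(V_1,\F_p)\otimes H^2(U,\F_p)\cong H^2(U,\F_p)$ and the collapse forces $H^2(U,\F_p)\cong\F_p$ outright. This buys you a stronger intermediate statement (one-dimensionality of $H^2$ on a cofinal family of open subgroups of $H$, not merely of the limit) and entirely avoids the duality lemmas and the corestriction-equivariance computation in this step; the paper's route, by contrast, never needs to know that individual $H^2(V,\F_p)$ are one-dimensional. Two small points to tighten: the existence of an open $W\le G$ with $W\cap H=U$ for \emph{every} open $U\le H$ is true but not obvious (one can take $W=UK_0$ for a suitable open $K_0$ normalizing $U$ with $K_0\cap H\le U$), and in any case you only need the cofinal family $\{W\cap H\}$ to compute $\dirlim_V H^2(V,\F_p)^*$; and your justification of $E_2^{n,m}\ne 0$ via nonvanishing of invariants of a nonzero finite $p$-torsion module over a pro-$p$ group is a legitimate substitute for the paper's citation of \cite[Corollary 7.1.7]{ribes2000profinite}.
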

	First we need the following observation:
	\begin{observation}\label{cohomology groups are finite}
		Let $H$ be a pro-$p$ group and $i$ an index such that $H^i(H,\F_p)$ is finite. Then for every finite $A\in{_p}\operatorname{Mod}(H)$, $H^i(H,A)$ is finite. Indeed, use induction on the size of $A$ and look at the exact sequence $0\to A_0\to A\to \F_p\to 0$. The claim follows from the induced exact sequence $H^i(H,A_0)\to H^i(H,A)\to H^i(H,\F_p)$.
	\end{observation}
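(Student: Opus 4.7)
My approach will be to induct on the cardinality $|A|$, which is a power of $p$ because $A$ is an $\F_p$-vector space. The base case $|A|=p$ reduces immediately to the hypothesis once I observe that $A\cong\F_p$ with trivial $H$-action: indeed, $\operatorname{Aut}(\F_p)$ has order $p-1$ coprime to $p$, so any continuous homomorphism from a pro-$p$ group into $\operatorname{Aut}(\F_p)$ is trivial, and therefore $H^i(H,A) = H^i(H,\F_p)$ is finite.

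For the inductive step with $|A|>p$, I need to produce a short exact sequence
\[
0 \to A_0 \to A \to \F_p \to 0
\]
in ${_p\operatorname{Mod}(H)}$ with $|A_0| < |A|$. This will come from a fixed-point argument on the Pontryagin dual: $A^*$ is a nonzero finite $p$-group carrying a continuous $H$-action, so every $H$-orbit on $A^*$ has $p$-power size and a standard counting argument forces the fixed submodule $(A^*)^H$ to be nonzero. Any nonzero element of $(A^*)^H$ is an $H$-equivariant homomorphism $A \to \F_p$; being nonzero with target $\F_p$, it is automatically surjective, and its kernel serves as $A_0$.

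With the short exact sequence in hand, the three-term piece
\[
H^i(H,A_0) \to H^i(H,A) \to H^i(H,\F_p)
\]
of the associated long exact cohomology sequence finishes the argument: $H^i(H,A_0)$ is finite by the inductive hypothesis, $H^i(H,\F_p)$ is finite by assumption, so $H^i(H,A)$ sits between two finite groups in an exact sequence and is therefore finite. There is no real obstacle here; the only mild subtlety worth flagging is the production of the $\F_p$-quotient used to launch the induction, which is handled by the fixed-point argument on $A^*$ described above.
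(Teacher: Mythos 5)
Your proposal is correct and follows essentially the same route as the paper: induction on $|A|$ via a short exact sequence $0\to A_0\to A\to \F_p\to 0$ and the three-term piece $H^i(H,A_0)\to H^i(H,A)\to H^i(H,\F_p)$ of the long exact sequence. The only difference is that you spell out the fixed-point argument on $A^*$ producing the $\F_p$-quotient, a fact the paper invokes without proof (it is the standard statement that $\F_p$ is the only simple $p$-torsion module over a pro-$p$ group).
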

	\begin{proof}[Proof of Theorem \ref{GPD extension yeilds Demushkin}]
		\begin{enumerate}
			
			\item First we show that $\cd(G/H)=2$. Since $\cd(G)\leq \cd(H)+\cd(G/H)$ holds for every exact sequence of pro-$p$ groups, we observe that $\cd(G/H)\geq 2$. Assume that $\cd(G/H)=m>2$. Look at the Hoschild-Serre spectral sequence $H^p(G/H,H^q(H,\F_p))\Rightarrow H^{p+q}(G,\F_p)$. Then $$E_2^{mn}\cong H^m(G/H,H^n(H,\F_p))\cong H^m(G/H,\F_p)\ne0.$$ The second isomorphism follows since $H$ is a GPD group of dimension $n$, and thus $H^n(H,\F_p)\cong \F_p$. By definition of cohomological dimension, $$E_2^{m+r,n-r+1}=E_2^{m-r,n+r-1}=0$$ so we get by induction that $E_r^{mn}=E_2^{mn}\ne0$ for all $r\geq 2$. Hence $H^{n+m}(G,\F_p)$ has a nontrivial graded piece- a contradiction.
			
			Now we show that $_pI\cong \F_p$ where $I$ denotes the dualizing module of $G/H$. Recall that by \cite[Corollary 3.4.7]{neukirch2013cohomology} $$_pI\cong {\dirlim}_{\operatorname{cor}^*}H^n(U/H,\F_p)^*$$ where $U$ runs over the set of all open subgroups of $G$ containing $H$. We shall prove that for every open subgroup $U$ of $G$ containing $H$, $H^n(U/H,\F_p)\cong \F_p$. That will imply that $_pI$ is a nontrivial subgroup of $\F_p$ and hence we are done. Let $U$ be an open subgroup of $G$ containing $H$. Look at the exact sequence $1\to H\to U\to U/H\to 1$. Recall that by the proof of Proposition \ref{dualizing module} every open subgroup $U$ of a GPD group of dimension $m$ satisfies $H^m(U,\F_p))\cong \F_p$. Moreover, $\cd(U/H)=\cd(G/H)=2$. Look at the Hoschild-Serre spectral sequence $$H^i(U/H,H^j(H,\F_p))\Rightarrow H^{i+j}(U,\F_p).$$ By cohomological dimensions, $$H^2(U/H,H^n(H,\F_p))\cong E_2^{2,n}\cong E_{\infty}^{2,n}\cong {\operatorname{gr}}_2H^{n+2}(U,\F_p)$$ is the only nontrivial piece of $H^{n+2}(U,\F_p)\cong \F_p$, so $ H^2(U/H,H^n(H,\F_p))\cong H^{n+2}(U,\F_p)\cong \F_p$. But $H^n(H,\F_p)\cong \F_p$ and we are done.
			
			\item First we show that $\cd(H)=2$. Since $\cd(H)\leq \cd(G)$ we observe that $\cd(H)$ is finite. First conclude that $\cd(H)\geq 2$ since $$\cd(G)\leq \cd(H)+\cd(G/H).$$ Now assume $\cd(H)=m>2$. Look at the Hoschild-Serre spectral sequence $H^i(G/H,H^j(H,\F_p))\Rightarrow H^{i+j}(G,\F_p)$. Then $$E_2^{nm}\cong H^n(G/H,H^m(H,\F_p))\ne0.$$ The intriviality follows from \cite[Corollary 7.1.7]{ribes2000profinite}, since $\cd(G/H)=n$ by Corollary \ref{cohomological dimension} and the assumption on $H^m(H,\F_p)$.   By definition of cohomological dimension, $$E_2^{n+r,m-r+1}=E_2^{n-r,m+r-1}=0$$ so we get by induction that $E_r^{nm}=E_2^{nm}\ne0$ for all $r\geq 2$. Hence $H^{n+m}(G,\F_p)$ has a nontrivial graded piece- a contradiction.
			
			We left to show that $_pI\cong \F_p$, where $I$ denotes the dializing module of $H$. Recall that ${_pI}\cong {\dirlim}_{V\leq _o H}H^2(V,\F_p)^*$ where $V$ runs over the set of open subgroups of $H$. Clearly, it is enough to look at the set of open subgroups of the form $H\cap U$ where $U$ is an open subgroup of $G$, since this set is cofinal in the set of all open subgroups of $H$. Let $U$ be an open subgroup of $G$. Then $\cd(U)=n+2$ and $H^{n+2}(U,\F_p)\cong \F_p$. Similarly,  $\cd(UH/H)=n$ and $H^{n}(UH/H,\F_p)\cong \F_p$. Moreover, $\cd(U\cap H)=\cd(H)=2$. Look at the Hoschild-Serre spectral sequence $$H^i(UH/H,H^j(U\cap H,\F_p))\Rightarrow H^{i+j}(U,\F_p).$$ By cohomological dimensions, $$H^n(UH/H,H^2(H\cap U,\F_p))\cong E_2^{n,2}\cong E_{\infty}^{n,2}\cong {\operatorname{gr}}_nH^{n+2}(U,\F_p)$$ is the only nontrivial piece of $H^{n+2}(G,\F_p)$, so $$ H^n(UH/H,H^2(H\cap U,\F_p))\cong H^{n+2}(U,\F_p)\cong \F_p.$$ 
			By Lemmas \ref{Isomorphism for 0} and \ref{induced commutes with pontrygain}, $$H^n(UH/H,H^2(H\cap U,\F_p))\cong H^0(UH/H,H^2(H\cap U,\F_p)^*)^*.$$ Indeed, it is enough to show that $H^2(H\cap U,\F_p)$ is a finite module. By taking the induced module from $H\cap U$ to $H$, it is enough to show that for every finite $A\in _p\operatorname{Mod}(H)$, $H^2(H,A)$ is finite. This follows from Observation \ref{cohomology groups are finite} due to the assumption that $H^2(H,\F_p)$ is finite. Thus,  $(H^2(H\cap U,\F_p)^*)^{UH/H}\cong \F_p$.
			
			Now, since $H^2(H\cap U,\F_p)^*$ is a finite module over $UH/H$, there is some $V\leq_o G$, $V\leq U$ such that $VH/H$ acts trivially on $H^2(H\cap U,\F_p)^*$. Look at the image of $H^2(H\cap U,\F_p)^*$ at $H^2(H\cap V,\F_p)^*$, we conclude that $\operatorname{cor}^*( H^2(H\cap U,\F_p)^*)\leq (H^2(H\cap V,\F_p)^*)^{VH/H}\cong \F_p$. Hence $_pI\cong \dirlim \F_p$.
			Since ${_pI}$ cannot be trivial, we are done.

		\end{enumerate}
	\end{proof}
	\begin{rem}
		In the above proof we used the following compatibility: let $g\in V$ and $F\in H^k(U\cap H,\F_p)^*$, then $g\operatorname{cor}^*(F)=\operatorname{cor}^*(gF)$. Indeed, let $f\in H^i(V\cap H,\F_p)$, then $$ (g\operatorname{cor}^*(F))(f)=(\operatorname{cor}^*(F))(g^{-1}f)=F(\operatorname{cor}(g^{-1}f))$$ while  $$\operatorname{cor}^*(gF)(f)=(gF)(\operatorname{cor}(f))=F(g^{-1}\operatorname{cor}(f)).$$ Hence it is enough to show that $$\operatorname{cor}(gf)=g\operatorname{cor}(f)$$ for every $g\in V$. We prove the compatibility in the level of homogeneous cochains. Let $\sigma_0,...,\sigma_k\in H\cap V$ and choose a set of representatives $c_1,...,c_n$ for $V\cap H$ in $U\cap H$. Recall that $H$ is normal, hence for every $g\in V$, ${c_1}^g,...,{c_n}^g$ is also a set of representatives for $V\cap H$ in $U\cap H$. Observe that $$({c_i\sigma_j\widetilde{c_i{\sigma_j}^{-1}}})^{g}={{c_i}^g{\sigma_j}^g\widetilde{{c_i}^g{\sigma_j}^g}  }$$ Thus $$\operatorname{cor}(gf)(\sigma_0,....,\sigma_k)=\sum_{c_i} gf(c_i\sigma_0\widetilde{c_i{\sigma_0}}^{-1},...,c_i\sigma_k\widetilde{c_i{\sigma_k}}^{-1})=$$ $$\sum_{c_i} f(({c_i}\sigma_0\widetilde{c_i{\sigma_0}}^{-1})^g,...,({c_i}\sigma_k\widetilde{c_i{\sigma_k}}^{-1})^g)=$$ $$\sum_{{c_i}^g} f(({c_i}^g{\sigma_0}^g\widetilde{{c_i}^g{\sigma_0}^{g}}^{-1}),...,{c_i}^g{\sigma_k}^g\widetilde{{c_i}^g{\sigma_k}^g}^{-1})=$$ $$(\operatorname{cor}(f))({\sigma_0}^g,...,{\sigma_k}^g)= g(\operatorname{cor}(f))(\sigma_0,...,\sigma_k)$$
	\end{rem}

	\bibliographystyle{plain}

\end{document}